\DeclarePairedDelimiter{\form}{\langle}{\rangle}
    \newcommand\ba{\begin{align*}}
    \newcommand\ea{\end{align*}}
    \newcommand\be{\begin{enumerate}}
    \newcommand\ee{\end{enumerate}}
    \newcommand\bpp{\begin{prop}}
    \newcommand\epp{\end{prop}}
    \newcommand\bpb{\begin{prob}}
    \newcommand\epb{\end{prob}}
    \newcommand\bd{\begin{defn}}
    \newcommand\ed{\end{defn}}
    \newcommand\bh{\begin{hint}}
    \newcommand\eh{\end{hint}}
    \newcommand\bN{\mathbb{N}}
    \newcommand\bR{\mathbb{R}}
    \newcommand\bZ{\mathbb{Z}}
    \newcommand\diam{\operatorname{diam}}
    \DeclareMathOperator\Homeo{Homeo}
    \newcommand\sse{\subseteq}
    \newcommand\co{\colon}
    \DeclareMathOperator\Diff{Diff}
    \DeclareMathOperator\Lip{Lip}
    \def\thetitle{Smoothing countable group actions on metrizable spaces}
    \def\theauthors{Inhyeok Choi, Sang-hyun Kim}
    \theoremstyle{plain}
    \newtheorem{thm}{Theorem}[section]
    \newtheorem{cor}[thm]{Corollary}
    \newtheorem{prop}[thm]{Proposition}
    \newtheorem{con}[thm]{Conjecture}
    \newtheorem{que}[thm]{Question}
    \newtheorem*{claim*}{Claim}
    \theoremstyle{remark}
    \theoremstyle{definition}
    \newtheorem{defn}[thm]{Definition}
    \newtheorem{prob}{Problem}[section]
\begin{document}
    \title\thetitle
    \date{\today}
    \keywords{homeomorphism group, bi-Lipschitz, manifold}
    \subjclass[2020]{Primary: 20A15, 57S05, ; Secondary: 03C07, 57S25, 57M60}
    
    \author[I. Choi]{Inhyeok Choi}
    \email{inhyeokchoi48@gmail.com}
\urladdr{https://inhyeokchoi48.github.io/}    

    \author[S. Kim]{Sang-hyun Kim}
    \email{kimsh@kias.re.kr}
    \urladdr{https://www.kimsh.kr}
    
\address{School of Mathematics, Korea Institute for Advanced Study (KIAS), Seoul, 02455, Korea}

\maketitle

\begin{abstract}
We prove that every topological action of a countable group on a metrizable space can be realized as a bi-Lipschitz action with respect to some compatible metric. 
This extends a result due to U. Hamenst\"{a}dt regarding finitely generated groups, and our proof is based upon her idea. This also gives a simple proof of a theorem due to Deroin, Kleptsyn and Navas regarding one-manifolds.
We also establish an analogous result for closed subgroups of locally compact groups. 
\end{abstract}

\section{Introduction}\label{sec:intro}

The homeomorphism group $\Homeo{M}$ and the diffeomorphism group $\Diff^k{M}$ of a mainfold $M$ are groups with rich structures, which can depend heavily on the regularity $k\in\bN$ of the group. For instance, there exists a faithful $C^k$ action of the lamplighter group $\bZ\wr \bZ$ on $[0,1]$ that cannot be topologically conjugated into a $C^{k+1}$ action; see~\cite{Tsuboi1987, CC1988} and also~\cite[Section 8.1.2]{KK2021book}. 
In fact, for every real $r\ge1$, there exists a finitely generated $C^r$--diffeomorphism group acting on $[0,1]$ that admits no group theoretic embedding into $\bigcup_{s>r}\Diff^s[0,1]$; see~\cite{KK2020crit}.

Meanwhile, such a rigidity does not generalize to low regularity. 
Deroin, Kleptsyn and Navas showed in~\cite[Th{\'e}or{\`e}me D]{DKN2007} that every topological action of a countable group on a compact connected one-manifold can be topologically conjugated into a bi-Lipschitz action.
For instance, the group 
\[
\form{a,b,c\mid a^2=b^3=c^7=abc}\]
does embed into the bi-Lipschitz homeomorphism group of the interval, while it does not embed into $\Diff^1[0,1]$; this latter fact was observed by Thurston~\cite{Thurston1974Top}.
Hamenst{\"a}dt recently gave a simpler proof of ~\cite[Th{\'e}or{\`e}me D]{DKN2007} for a finitely generated group using an averaging technique~\cite{Hamenstadt2024PAMS}.
We denote by $\Lip(X,\delta)$ the group of bi-Lipschitz homeomorphisms of a metric space $(X,\delta)$.
In this note, we employ her strategy and establish the following.

\begin{thm}\label{thm:top}
If $X$ is a metrizable space and if $\Gamma$ is a countable subgroup of $\Homeo(X)$, then $X$ admits a compatible metric $\delta$ such that $\Gamma\le\Lip(X,\delta)$.
\end{thm}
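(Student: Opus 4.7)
The plan is to adapt Hamenst\"adt's averaging construction from the finitely generated case to an arbitrary countable $\Gamma$, with the main novelty being to manufacture a surrogate for word length directly on $\Gamma$. Start with any compatible metric $d$ on $X$ and replace $d$ by $\min(d,1)$ so that we may assume $d\le 1$. Enumerate $\Gamma=\{g_0=1,g_1,g_2,\ldots\}$, assign each letter $g_n^{\pm 1}$ the weight $n$, and let $\ell(\gamma)$ be the minimum total weight of an expression of $\gamma$ in these letters. Then $\ell\co\Gamma\to[0,\infty)$ is subadditive and symmetric under inversion; moreover $\{\gamma:\ell(\gamma)\le R\}$ consists of products of at most $R$ letters drawn from $\{g_n^{\pm 1}:n\le R\}$, so $\ell$ is proper. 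Fix $C>1$ and set $L(\gamma)=C^{\ell(\gamma)}$, so that $L\ge 1$, $L$ is proper, and $L(\gamma\eta)\le L(\gamma)L(\eta)$.

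Now define
\[
\delta(x,y)=\sup_{\gamma\in\Gamma}\frac{d(\gamma x,\gamma y)}{L(\gamma)}.
\]
Finiteness (in fact $\delta\le 1$) follows from $d\le 1$ and $L\ge 1$; the triangle inequality passes through the supremum; and taking $\gamma=1$ gives $\delta\ge d$, so $\delta$ separates points and is a genuine metric. For compatibility I would verify that $d$-convergence implies $\delta$-convergence: given $x_n\to x$ in $d$ and $\epsilon>0$, properness of $L$ makes $F_\epsilon=\{\gamma:L(\gamma)\le 1/\epsilon\}$ finite, so the terms of the supremum with $\gamma\notin F_\epsilon$ are bounded by $\epsilon$, while continuity of each of the finitely many $\gamma\in F_\epsilon$ drives $d(\gamma x_n,\gamma x)\to 0$, giving $\delta(x_n,x)\le 2\epsilon$ eventually.

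The Lipschitz property is then the formal consequence of submultiplicativity: for $\gamma\in\Gamma$, reindexing $\eta'=\eta\gamma$ in the supremum gives
\[
\delta(\gamma x,\gamma y)=\sup_{\eta'\in\Gamma}\frac{d(\eta' x,\eta' y)}{L(\eta'\gamma^{-1})}\le L(\gamma)\,\delta(x,y),
\]
using the bound $L(\eta')\le L(\eta'\gamma^{-1})L(\gamma)$ coming from submultiplicativity. Applying the same estimate to $\gamma^{-1}$ shows that each $\gamma\in\Gamma$ is bi-Lipschitz with constant $\max\{L(\gamma),L(\gamma^{-1})\}$, so $\Gamma\le\Lip(X,\delta)$ as required.

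The main obstacle is juggling three competing requirements with a single choice of weight $L$: finiteness of $\delta$, compatibility of $\delta$ with the given topology, and the Lipschitz bound above. Submultiplicativity of $L$ handles the Lipschitz estimate; properness of $L$ is what makes the compatibility argument go through; and boundedness of $d$ together with $L\ge 1$ gives finiteness. The weighted word length built from the enumeration of $\Gamma$ is precisely the tool that couples all three properties, playing the role of Hamenst\"adt's word metric in her finitely generated argument.
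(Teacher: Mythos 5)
Your proposal is correct, but it reaches the conclusion by a genuinely different technical route than the paper, and the difference is worth recording. The paper handles countability by pulling back the word length from $F_2$ via the embedding $F_\infty=\langle x_0,x_1,\ldots\rangle\hookrightarrow F_2$, $x_i\mapsto t^ixt^{-i}$, and then forms the weighted \emph{sum} $\delta=\sum_{g}e^{-s\|g\|}\,\hat\delta\circ\rho(g)^{-1}$; this forces a summability condition, which is why the exponent $s>\log 3$ must beat the growth rate of balls in $F_2$. You instead build a proper, subadditive, inversion-symmetric weight $\ell$ directly on $\Gamma$ (letter $g_n^{\pm1}$ costs $n$), exponentiate it to a submultiplicative $L=C^{\ell}$, and take a weighted \emph{supremum} $\delta(x,y)=\sup_\gamma d(\gamma x,\gamma y)/L(\gamma)$. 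All your verifications go through: $\delta\le 1$ and $\delta\ge d$ give a genuine metric; properness of $L$ reduces compatibility to finitely many terms plus a uniform tail bound; and the reindexing $\eta'=\eta\gamma$ together with $L(\eta')\le L(\eta'\gamma^{-1})L(\gamma)$ gives the bi-Lipschitz estimate with constant $\max\{L(\gamma),L(\gamma^{-1})\}$. What your version buys is economy: no free-group embedding and no growth estimate are needed, since properness of $\ell$ replaces summability of the weights. What the paper's summed version buys is transferability: an averaged \emph{sum} of pullbacks makes sense for objects other than metrics, which is exactly what is used in the Oxtoby--Ulam corollary (where $\nu=\sum_g e^{-s\|g\|}\mu\circ g^{-1}$ must be a measure, and a supremum of measures would not serve) and in the locally compact generalization, where the sum becomes an integral against Haar measure. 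Your argument proves Theorem~\ref{thm:top} in full, but would need to be converted back to a summed average to yield those corollaries.
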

We emphasize that the finitely generated case of the above is proved in~\cite{Hamenstadt2024PAMS}. The idea of embedding a countable group into $F_{2}$ originates from~\cite{higman1949embedding}.

\begin{proof}[Proof of Theorem~\ref{thm:top}]
We let $F_r$ denote the free group of rank $1\le r\le\infty$.
We have an embedding 
\[F_\infty=\form{x_0,x_1,x_2,\ldots}\hookrightarrow F_2=\form{x,t}\] defined by
$x_i\mapsto t^i x t^{-i}$.
Let $\|\cdot\|$ denote the word length in $F_2$ with the basis $\{x,t\}$, which naturally restricts to $F_\infty$ by the above embedding.

Let us pick an arbitrary compatible metric $\hat\delta$ on $X$; we may further assume this metric is bounded, by considering a compatible metric $\hat\delta/(1+\hat\delta)$ if necessary.
Since $\Gamma$ is countable, we have a representation
\[
\rho\co F_\infty\to \Homeo(X)\]
such that $\Gamma=\rho(F_\infty)$.
It suffices to prove that $\rho$ factors as
\[
F_\infty\stackrel{\rho}{\to}\Lip (X,\delta)\hookrightarrow \Homeo(X)\]
for some compatible metric $\delta$.

We construct $\delta$ as a weighted average of the $\rho$--translates of $\hat\delta$ as follows:
\[
\delta
:=
\sum_{g\in F_\infty}
\exp(-s \|g\|) \hat\delta\circ \rho(g)^{-1}\]
Here, we fix $s>\log 3$.
Let us establish the following.

\begin{claim*}
\begin{enumerate}[(i)]
    \item\label{p:conv} The infinite sum defining $\delta$ is convergent.
    \item\label{p:met} The map $(x,y)\mapsto \delta(x,y)$ determines a bounded metric on $X$.
    \item\label{p:compat} The topology of $X$ is compatible with the metric $\delta$.
    \item\label{p:lip} We have $\rho(F_\infty)\le \Lip (X,\delta)$.
\end{enumerate}
\end{claim*}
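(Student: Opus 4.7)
The plan is to verify the four parts of the Claim in order, with the main quantitative input being a single estimate on exponential growth. The ball of radius $n$ in $F_2=\langle x,t\rangle$ has cardinality at most $2\cdot 3^n$, so the same bound applies to the subset $F_\infty\subset F_2$. Since $\hat\delta\le M$ for some bound $M$, this immediately yields
\[\delta(x,y)\le M\sum_{g\in F_\infty}e^{-s\|g\|}\le 2M\sum_{n\ge 0}(3e^{-s})^{n},\]
a geometric series which converges precisely because $s>\log 3$. This establishes (i) and the boundedness assertion in (ii). The remaining metric axioms in (ii) are inherited termwise: symmetry and the triangle inequality pass through the sum from $\hat\delta\circ\rho(g)^{-1}$, and positive-definiteness follows from the single term $g=e$, which gives $\delta\ge\hat\delta$.

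For compatibility (iii), the same inequality $\delta\ge\hat\delta$ shows that every $\hat\delta$-open set is $\delta$-open. For the reverse inclusion, I would take a $\hat\delta$-convergent sequence $x_n\to y$ and $\epsilon>0$, and split the sum for $\delta(x_n,y)$ at a threshold word length $N$. The tail over $\|g\|>N$ is uniformly dominated by $2M\sum_{n>N}(3e^{-s})^n$, which can be made $<\epsilon/2$ by choosing $N$ large. The head is a \emph{finite} sum of the functions $(a,b)\mapsto\hat\delta(\rho(g)^{-1}a,\rho(g)^{-1}b)$; each is $\hat\delta$-continuous, being the composition of $\hat\delta$ with the $\hat\delta$-homeomorphism $\rho(g)^{-1}$, so the head tends to $0$. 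Hence $\delta(x_n,y)\to 0$.

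For the bi-Lipschitz property (iv), let $h\in F_\infty$. The substitution $g=hg'$ in the defining sum, together with the identity $\rho(g)^{-1}\rho(h)=\rho(g')^{-1}$, gives
\[\delta(\rho(h)x,\rho(h)y)=\sum_{g'\in F_\infty}e^{-s\|hg'\|}\,\hat\delta(\rho(g')^{-1}x,\rho(g')^{-1}y).\]
The subadditivity $\bigl|\|hg'\|-\|g'\|\bigr|\le\|h\|$ of word length in $F_2$ then produces the two-sided estimate
\[e^{-s\|h\|}\,\delta(x,y)\le\delta(\rho(h)x,\rho(h)y)\le e^{s\|h\|}\,\delta(x,y),\]
so $\rho(h)$ is bi-Lipschitz with constant $e^{s\|h\|}$. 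The main obstacle---really the only non-formal step---is compatibility (iii), where one must balance the geometric tail bound against continuity of each of the finitely many head summands; but the explicit split above makes even this routine once the choice $s>\log 3$ is fixed.
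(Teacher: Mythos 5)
Your proof is correct and follows essentially the same route as the paper: the geometric growth bound $\#\{g:\|g\|\le n\}\le 2\cdot 3^n$ against $s>\log 3$ for (i)--(ii), a tail-plus-finite-head split for (iii) (you use sequences where the paper uses the ball inclusion $\hat B(x,r')\subseteq B(x,r)$, which is equivalent here), and the triangle inequality for the word norm via the substitution $g=hg'$ for (iv). No gaps.
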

Parts~(\ref{p:conv}) and~(\ref{p:met}) follow from that $\hat\delta$ is bounded
and that 
\[\sum_{g\in F_\infty} \exp(-s\|g\|)\le\sum_{g\in F_2}\exp(-s\|g\|)\le\sum_{r=0}^\infty \#\{g\in F_2\co \|g\|\le r\} \exp(-sr)<\infty.\]

We let $B(x,r)$ and $\hat B(x,r)$ denote the radius $r$ open balls centered at $x\in X$
in the metrics $\delta$
and $\hat \delta$, respectively.
For part (\ref{p:compat}), it suffices to show that for all $x\in X$ and $r>0$ there exists $r'$ such that
\begin{equation}\label{eq:incl}\tag{A}
\hat B(x,r')\sse B(x,r).
\end{equation}
Denoting by $\diam(X,\hat\delta)$ the diameter of the metric space $(X,\hat\delta)$, 
we can pick a finite set $A\sse F_\infty$ such that 
\[\sum_{g\not\in A} \exp(-s\|g\|) \diam(X,\hat\delta)<r/2.\]
We then pick $r'>0$ such that for all $a\in A$ and for all $y\in \hat B(x,r')$ we have
\[ \hat\delta(\rho(a^{-1})(x),\rho(a^{-1})(y))<\frac{r/2}{\#A}.\]
The inclusion relation in \eqref{eq:incl} is now straightforward.

For part (\ref{p:lip}), it suffices to note from the triangle inequality for $\|\cdot\|$ that
\[
\exp(-s\|g\|)\delta(x,y)\le\delta(\rho(g)(x),\rho(g)(y))\le \exp(s\|g\|)\delta(x,y).\]
The claim is proved, and the conclusion is now immediate.
\end{proof}

Since every pair of compatible metrics on connected one--manifolds are topologically conjugate to each other, we immediately recover the aformentioned theorem of Deroin, Kleptsyn and Navas.
\begin{cor}
If $X$ is a connected one--manifold (with or without boundary),
then every countable subgroup of $\Homeo(X)$ is topologically conjugate into $\Lip (X)$.
\end{cor}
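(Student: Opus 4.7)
The plan is to combine Theorem~\ref{thm:top} with the classical fact that any two compatible metrics on a connected one-manifold are topologically conjugate to one another. First I would apply Theorem~\ref{thm:top}: since $X$ is metrizable and $\Gamma\le\Homeo(X)$ is countable, there exists a compatible metric $\delta$ on $X$ such that $\Gamma\le\Lip(X,\delta)$.

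Next, I fix a standard metric $d$ on $X$ coming from its structure as a connected one-manifold (so $X$ is homeomorphic to one of $S^1$, $\mathbb R$, $[0,1]$, or a half-line). I would then invoke the indicated fact to produce a homeomorphism $\phi\in\Homeo(X)$ such that conjugation by $\phi$ carries $\Lip(X,\delta)$ into $\Lip(X,d)=\Lip(X)$. Applied to the subgroup $\Gamma$, this gives $\phi\Gamma\phi^{-1}\le\Lip(X)$, which is precisely the statement that $\Gamma$ is topologically conjugate into $\Lip(X)$.

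The main obstacle lies in the invoked fact about compatible metrics. Its proof would exploit the natural (cyclic) order on $X$ to construct $\phi$ via a reparametrization that matches $\delta$-lengths of nested sub-arcs with standard lengths, for instance as a uniform limit of dyadic piecewise linear reparametrizations indexed by a cofinal sequence of partitions of $X$. A subtle point is that one cannot hope in general for $\phi^{*}\delta$ to be globally bi-Lipschitz equivalent to $d$: the square-root metric on $[0,1]$ is H\"older, not Lipschitz, equivalent to the standard one, and no self-homeomorphism can rectify this obstruction. So the construction must realize the conjugation at the level of bi-Lipschitz subgroups of $\Homeo(X)$ rather than by producing bi-Lipschitz equivalent metrics, and it is precisely the one-dimensional topology (via the linear or cyclic order) that makes this possible.
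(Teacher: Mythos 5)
Your argument is essentially the paper's own: the corollary is deduced by combining Theorem~\ref{thm:top} with the classical fact that any two compatible metrics on a connected one--manifold are topologically conjugate, which is the entirety of what the paper says. Your extra discussion of how that fact is proved, and your (correct) caveat that one cannot expect $\phi_{*}\delta$ to be bi-Lipschitz equivalent to the standard metric, goes beyond what the paper records but does not change the route.
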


An \emph{Oxtoby--Ulam measure} on a manifold is a fully supported, atomless, null-on-boundary, Borel probability measure on the manifold. Recall two measures $\mu$ and $\nu$ on a space $X$ are \emph{bi-Lipschitz equivalent} if the ratio of two measures are bounded by $C$ and by $1/C$ for some $C\ge1$. 
For a measure $\mu$, we let $[\mu]$ denote its bi-Lipschitz equivalence class. We denote by $\Homeo_{[\mu]}(M)$ the group of homeomorphisms on $X$ that preserves the equivalence class $[\mu]$.
The following is a generalization of the above corollary to higher dimensions, based on the proof of~\cite[Corollary 2.5]{Hamenstadt2024PAMS}.

\begin{cor}
If $M$ is a compact connected $n$--manifold (possibly with boundary) equipped with an Oxtoby--Ulam measure $\mu$, then every countable subgroup of $\Homeo(M)$ is topologically conjugate into $\Homeo_{[\mu]}(M)$.
\end{cor}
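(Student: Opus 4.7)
The plan is to rerun the averaging construction of Theorem~\ref{thm:top} at the level of measures instead of metrics. As in the proof of Theorem~\ref{thm:top}, I would embed $F_\infty\hookrightarrow F_2$ and fix a representation $\rho\co F_\infty\to\Homeo(M)$ with $\Gamma=\rho(F_\infty)$. Let $\|\cdot\|$ denote the word length in $F_2$, pick $s>\log 3$, and define
\[
\nu:=\sum_{g\in F_\infty}\exp(-s\|g\|)\,(\rho(g)^{-1})_{*}\mu.
\]
The convergence estimate in Theorem~\ref{thm:top} applies verbatim, so $\nu$ is a finite nonzero Borel measure (each summand being a probability measure).

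I would then check that, after normalization, $\nu$ is an Oxtoby--Ulam measure. Each summand $(\rho(g)^{-1})_{*}\mu$ is fully supported, atomless, and null-on-boundary, using that $\rho(g)$ is a homeomorphism of $M$ preserving $\partial M$ by invariance of domain; these three properties all pass to the non-negative infinite sum. An index-shift argument in $F_\infty$, entirely analogous to part~(\ref{p:lip}) of the Claim in Theorem~\ref{thm:top}, would give
\[
(\rho(h))_{*}\nu
=\sum_{g\in F_\infty}\exp(-s\|gh\|)\,(\rho(g)^{-1})_{*}\mu,
\]
and the triangle inequality $\big|\|gh\|-\|g\|\big|\le\|h\|$ then yields
\[
\exp(-s\|h\|)\,\nu\;\le\;(\rho(h))_{*}\nu\;\le\;\exp(s\|h\|)\,\nu
\]
for every $h\in F_\infty$. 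In particular $\rho(F_\infty)\le\Homeo_{[\nu]}(M)$.

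The final step is to invoke the Oxtoby--Ulam theorem in the form valid for compact connected manifolds with boundary (due to Berlanga--Epstein), producing $\phi\in\Homeo(M)$ with $\phi_{*}\nu=\mu$. Conjugation by $\phi$ sends $\Homeo_{[\nu]}(M)$ onto $\Homeo_{[\mu]}(M)$, so $\phi\,\Gamma\,\phi^{-1}\le\Homeo_{[\mu]}(M)$, which is the desired topological conjugation.

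The only point requiring care is that the three Oxtoby--Ulam properties survive the infinite summation; full support and atomlessness are immediate, and boundary-nullity reduces to $\rho(g)(\partial M)=\partial M$, so I do not anticipate a serious obstacle. The argument cleanly parallels the averaging technique of Theorem~\ref{thm:top} and follows the strategy of~\cite[Corollary 2.5]{Hamenstadt2024PAMS}, with the only new ingredient being the passage from finitely generated to countable groups, which is absorbed by the $F_\infty\hookrightarrow F_2$ embedding exactly as in the proof of Theorem~\ref{thm:top}.
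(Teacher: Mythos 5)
Your proposal is correct and follows essentially the same route as the paper: average the pushforwards of $\mu$ over $F_\infty$ with the weights $\exp(-s\|g\|)$, verify the resulting measure is Oxtoby--Ulam and quasi-preserved with uniformly bounded Radon--Nikodym ratios, and then conjugate it back to $\mu$ via the Oxtoby--Ulam uniqueness theorem. The extra details you supply (normalization, the three Oxtoby--Ulam properties surviving the sum, the boundary case via Berlanga--Epstein) are points the paper leaves implicit, but the argument is the same.
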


\begin{proof}
Let us fix a surjective representation 
\[
\rho\co F_\infty\to \Gamma\le\Homeo(M).\]
We define a new measure
\[
\nu:=\sum_{g\in F_\infty} \exp(-s\|g\|)\mu\circ g^{-1}.\]
Then $\nu$ is also an Oxtoby--Ulam measure, satisfying for each $g\in \Gamma$ that
\[
\exp(-s\|g\|)\nu\le g^*\nu\le \exp(s\|g\|)\nu.\]
A classical theorem due to von Neumann and to Oxtoby--Ulam states that an Oxtoby--Ulam measure on a compact connected manifold is unique up to topological conjugacy.
This implies that $\mu$ and $\nu$ are topologically conjugate, 
and hence that each element in some fixed conjugate of $\Gamma$  preserves the bi-Lipschitz equivalence class of $\mu$. The conclusion is immediate.
\end{proof}

A similar argument generalizes to locally compact groups; note that the corollary below is a direct generalization of Theorem~\ref{thm:top} under the additional hypothesis of local compactness on the space.

\begin{cor}\label{cor:topVar}
If $X$ is a locally compact metrizable space, if $H$ is a closed subgroup of a compactly generated locally compact group, and if $\rho : H \rightarrow \Homeo(X)$ is a continuous homomorphism, then $X$ admits a compatible metric $\delta$ such that $\rho$ factors through a continuous homomorphism $H\to\Lip (X,\delta)$.
\end{cor}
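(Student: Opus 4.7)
My plan is to follow the proof of Theorem~\ref{thm:top}, replacing the counting sum over $F_\infty$ with a Haar integral over $H$ and replacing the word length on $F_\infty$ with a length function imported from the ambient compactly generated locally compact group $G$. I will choose a compact symmetric generating set $K \subseteq G$ that is a neighborhood of the identity (with $K^0 := \{e\}$), set $\|g\|_K := \min\{n \ge 0 : g \in K^n\}$, fix a bounded compatible metric $\hat\delta$ on $X$, and for a constant $s > 0$ to be specified define
\[
\delta(x, y) := \int_H \exp(-s\|h\|_K)\, \hat\delta\bigl(\rho(h^{-1})x,\, \rho(h^{-1})y\bigr)\, d\nu(h),
\]
where $\nu$ is a left Haar measure on $H$.

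The principal analytic input is an exponential bound $\nu(H \cap K^n) \le C\lambda^n$. I would obtain this via Fubini. Pick an open, relatively compact, symmetric neighborhood $V \subseteq K$ of the identity in $G$, and let $\mu$ denote a left Haar measure on $G$. For $h \in H \cap K^n$ one has $hV \subseteq K^{n+1}$, so
\[
\nu(H \cap K^n)\,\mu(V) \;=\; \int_{H \cap K^n} \mu(hV)\,d\nu(h) \;=\; \int_{K^{n+1}} \nu\bigl(H \cap K^n \cap gV^{-1}\bigr)\, d\mu(g).
\]
Whenever the inner integrand is nonzero, we may write $g = h_0 v$ with $h_0 \in H$ and $v \in V$; then $gV^{-1} \subseteq h_0 VV^{-1} \subseteq h_0 K^2$, and left-invariance of $\nu$ gives $\nu(H \cap gV^{-1}) \le \nu(H \cap K^2)$. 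Hence $\nu(H \cap K^n) \le \nu(H \cap K^2)\,\mu(K^{n+1})/\mu(V)$. The standard covering estimate $\mu(K^{n+1}) \le C'\lambda^n$ for compactly generated groups then yields the bound, and $\int_H e^{-s\|h\|_K}\, d\nu < \infty$ whenever $s > \log\lambda$.

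With such an $s$ fixed, the four-part claim from the proof of Theorem~\ref{thm:top} transfers almost verbatim. Boundedness of $\hat\delta$ yields convergence of $\delta(x,y)$ together with the metric axioms; for each $g \in H$, the change of variable $h = gk$ (which preserves $\nu$ by left-invariance) combined with $|\|gk\|_K - \|k\|_K| \le \|g\|_K$ yields the bi-Lipschitz bounds
\[
e^{-s\|g\|_K}\,\delta(x,y) \;\le\; \delta(\rho(g)x, \rho(g)y) \;\le\; e^{s\|g\|_K}\,\delta(x,y);
\]
and compatibility of the $\delta$-topology is verified by truncating the integral to a compact subset $A \subseteq H$ and applying a tube-lemma argument to the action map $(h, y) \mapsto \rho(h^{-1})y$ on $A \times X$, whose joint continuity follows from continuity of $\rho : H \to \Homeo(X)$ (in the compact-open topology) and local compactness of $X$. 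Once $\Lip(X,\delta)$ is given the subspace topology inherited from $\Homeo(X)$, continuity of the factored homomorphism $H \to \Lip(X,\delta)$ is automatic from continuity of $\rho$ into $\Homeo(X)$ and the containment $\rho(H) \subseteq \Lip(X,\delta)$ just established. The main obstacle is the volume-growth bound above, which is the only place the hypothesis that $H$ is closed in a compactly generated ambient group is actually used; note in particular that $H$ itself need not be compactly generated for the argument to succeed.
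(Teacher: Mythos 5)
Your proposal is correct, but it reaches the key quantitative input by a genuinely different route than the paper. The paper never works with the word norm $\|\cdot\|_K$ on $G$ directly: it discretizes $G$ by a maximal $K$-separated net $V$ (so that $G=\bigcup_{v\in V}vK$), builds a connected graph of bounded valence on $V$ with edges given by $x^{-1}y\in K^3$, extracts the exponential growth bound $\#\{v\in V\co d_\Gamma(1,v)\le r\}=o(e^{s_0r})$ combinatorially from the bounded valence, and then defines $\|g\|$ as the graph distance to the nearest net point covering $g$; the price is that subadditivity degrades to $\big|\|gh\|-\|g\|\big|\le 3\|h\|+1$, which still suffices for the bi-Lipschitz estimate. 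You instead keep the genuine word norm (exactly subadditive, so your bi-Lipschitz constants are cleaner) and replace the net-counting by the volume bound $\nu(H\cap K^n)\le C\lambda^n$, proved by unfolding $\nu(H\cap K^n)\mu(V)$ with Fubini against the Haar measure of the ambient group and invoking the standard at-most-exponential growth of $\mu(K^n)$. This Fubini step is the real content you add: it is what lets you integrate against the Haar measure of $H$ itself, and it makes explicit the comparison between the two Haar measures that the paper leaves implicit (the paper writes a single $\mu$ for the measure on $G$ while integrating over $H$, and its convergence display silently needs the bound $\nu(H\cap vK)\le\nu(H\cap K^2)$ that your argument supplies). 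The remaining three verifications --- convergence and the metric axioms, equicontinuity over a compact $A\subseteq H$ via joint continuity of the action map and local compactness of $X$, and the conjugation/change-of-variable computation for the bi-Lipschitz bound --- match the paper's in substance, and your closing remark that $H$ need not itself be compactly generated is exactly the point of the paper's reference to Caprace--Cornulier. One small item both you and the paper pass over quickly: compatibility of $\delta$ also requires the reverse inclusion $B(x,r'')\subseteq\hat B(x,r)$, which in the discrete case is free from the identity term $\hat\delta\le\delta$ but in the integral setting needs a short extra argument (e.g., if $\delta(x,y_n)\to0$ then $\hat\delta(\rho(h^{-1})x,\rho(h^{-1})y_n)\to0$ for $\nu$-a.e.\ $h$ in a compact identity neighborhood along a subsequence, whence $y_n\to x$); this is easily repaired and does not affect the correctness of your approach.
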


We note that, contrary to the discrete setting where every countable group embeds in a finitely generated group~\cite{higman1949embedding}, not every $\sigma$-compact locally compact group sits in a compactly generated locally compact group as a closed subgroup~\cite{caprace2014on-embeddings}.

\begin{proof}
Let $\iota : H \rightarrow G$ be the embedding of $H$ into a compactly generated locally compact group $G$. Let $K$ be a compact symmetric neighborhood of $1\in G$ that generates $G$. Let $\mu$ be the Haar measure on $G$, normalized by $\mu(K) = 1$. As in the proof of Theorem \ref{thm:top}, let us fix a bounded compatible metric $\hat{\delta}$ on $X$.

We define a simplicial graph $\Gamma = (V, E)$ as follows. The set $V$ is a maximal subset of $G$ such that $1\in V$ and such that every pair of distinct elements $x, y \in V$ satisfy $x^{-1} y \notin K$. In particular, we have
\[
G = \bigcup_{v\in V}vK.\]
Let us draw an edge between two vertices $x, y \in V$ if and only if $x^{-1} y \in K^{3}$.

To observe the connectivity of $\Gamma$, pick an arbitrary $v \in V$. Since $K$ generates $G$, 
there exists a sequence $(g_{i})_{i=0}^{n}$ in $G$ such that $g_0=1, g_n=v$ and $g_{i}^{-1} g_{i+1} \in K$ for each $i$. For each $g_{i}$, there exists $v_{i} \in V$ such that $g_{i}^{-1} v_{i} \in K$ thanks to the maximality of $V$. It follows that $(1, v_{1}, \ldots, v_{n-1}, v)$ is a path in $\Gamma$ connecting $1$ to $v$. 

Let us take a symmetric open neighborhood $U$ of $1$ such that $U^{2} \subseteq K$. We claim that $\{vU : v \in V\}$ are disjoint. Indeed, if $vu = v'u'$ for some elements $u, u' \in U$ and a pair of distinct elements $v, v' \in V$, then $(v')^{-1} v = u'u^{-1} \in U^{2} \subseteq K$. This contradicts the definition of $V$.

Note that $K^{3}$ is a compact set with an open cover $\{x U : x \in K^{3}\}$, which hence admits a finite subcover $\{x_{1} U, \ldots, x_{N} U\}$.
If we set \[Lk(v_{0}) := \{w \in V : (v_{0}, w) \in E\},\] then $v_{0}^{-1} Lk(v_{0})$ is contained in $K^3 \subseteq \cup_{i=1}^{N} x_{i} U$. Moreover, each $x_{i} U$ can take at most one element of $v_{0}^{-1} Lk(v_{0}) \subseteq v_{0}^{-1} V$, due to the disjointness of $\{vU :v \in V\}$. Consequently, $v_{0}^{-1} Lk(v_{0})$ consists of at most $N$ elements.
Hence, the graph $\Gamma$ has a bounded valence. 
There exists some $s_{0} > 0$ such that \[
\# \{ g\in V \co  d_\Gamma(1,g)\le r\} = o(e^{s_{0}r}).
\]
Using this graph, we define a ``norm-like'' value for $g \in G$ as: \[
\| g\| := \inf \left\{ d_\Gamma(1,v)  : \exists v\in V\text{ and }g \in  vK\right\}.
\]
Note that $\|v\|=d_\Gamma(1,v)$ for $v\in V$.
We now define \[
\delta := \int_{g \in H} \exp(-2s_{0} \|g\|) \cdot \hat{\delta} \circ \rho(g)^{-1} \, d\mu.
\]

It suffices to check the four conditions in the proof of Theorem~\ref{thm:top}.
For the first item, we check\[\begin{aligned}
\int_{g \in H}  \exp(-2s_{0} \|g\|) \, d\mu &\le \int_{g \in G}  \sum_{v \in V} \chi_{g \in vK}\exp(-2s_{0} \|v\|) \, d\mu \\
&\le \sum_{v \in V} \int_{g \in G}\chi_{g \in vK}  \exp(-2s_{0} \|v\|) \, d\mu \\
&\le \sum_{v \in V}\exp(-2s_{0} \|v\|) < +\infty.
\end{aligned}
\]

For the third item, we use the inner regularity of $\mu$ to find a compact set $A \subseteq H$ such that \[
\int_{g \in H \setminus A} \exp(-2s_{0} \|g\|)\diam(X, \hat{\delta})\, d\mu < r/2. 
\]
We claim the existence of $r'>0$ such that for all $g \in A$ and for all $y \in \hat B(x, r')$ we have an ``equicontinuity at $x$'' in the sense that \[
\hat{\delta}(\rho(g^{-1}) (x), \rho(g^{-1})(y)) < \frac{r}{2 \mu(A)}.
\]
Indeed, if such an $r'$ does not exist, then there exist sequences $g_{n} \in A$ and $y_{n} \in X$ satisfying $\lim_{n} \hat{\delta}(x, y_{n})= 0$ and \[
\hat{\delta}(\rho(g_{n}^{-1}) (x), \rho(g_{n}^{-1})(y_n)) \ge \frac{r}{2\mu(A)}.
\]
We may assume that $g_n$ converges to some $g\in A$.
Let $N$ be a compact neighborhood of $x$ contained in $\rho(g)^{-1} (\hat B(x, r/2\mu(A)))$, the existence of which is given by the local compactness of $X$. Since $\rho$ is continuous, we have that $\rho(g_n)$ converges to $\rho(g)$ in $\Homeo(X)$.
It follows from the compact--open topology of $\Homeo(X)$ that $N \subseteq \rho(g_{n})^{-1} (\hat B(x, r/2\mu(A)))$ for sufficiently large $n$. Meanwhile, since $N$ is a compact neighborhood of $x$, the point $y_{n}$ eventually belongs to $N$ and hence to $\rho(g_{n})^{-1} (\hat B(x, r/2\mu(A)))$. This is a contradiction.

Given the previous claim,
let us assume $y\in \hat B(x,r')$ as above.
Then \[\begin{aligned}
{\delta}(x, y)& = \int_{g \in H\setminus A} \exp(-2s_{0}\|g\|) \hat{\delta}(\rho(g)^{-1} (x), \rho(g)^{-1} y)\, d\mu \\
&+ \int_{g \in  A} \exp(-2s_{0}\|g\|) \hat{\delta}(\rho(g)^{-1} (x), \rho(g)^{-1} y)\, d\mu \\
&\le \int_{g \in H \setminus A }\exp(-2s_{0}\|g\|)  \diam(X, \hat{\delta}) \, d\mu + \int_{g \in A} \hat{\delta}(\rho(g)^{-1} (x), \rho(g)^{-1} y)\, d\mu \\
&< \frac{r}{2} + \frac{r}{2\mu(A)} \cdot \mu(A) \le r
\end{aligned}
\]
as desired.

For the fourth item, we  claim that \[
\big|\|gh\| - \|g\| \big| \le 3\|h\|+1
\]
holds for all $g, h \in G$. To see this, let $\|g\| = m$ and $\|h\| = n$. Let $v_{1}, \ldots, v_{m} \in V$ be such that $v_{i-1}^{-1} v_{i} \in K^{3}$ for each $i$ and $v_{m}^{-1} g \in K$. Using a similar decomposition for $h$, one can take $u_{1}, \ldots, u_{3n+1} \in G$ such that $u_{i-1}^{-1}u_{i} \in K$ for each $i$ and $u_{3n+1 }= h$. Now for each $i=1, \ldots, 3n+1$, take $w_{i}\in V$ such that $w_{i}^{-1} \cdot (g u_{i}) \in K$. Then \[
(id, v_{1}, \ldots, v_{m}, w_{1}, \ldots, w_{3n+1})
\]
is a sequence in $V$, each step difference in $K^{3}$, such that $w_{3n+1}^{-1} gh \in K$. We conclude that $\|gh\| \le \|g\| + 3\|h\| +1$; the other inequality can be deduced similarly.

Given this estimate, we deduce that\[
\exp(-2s_0-6s_{0} \|g\|) \delta(x, y) \le (\rho(g) (x), \rho(g) (y)) \le \exp(2s_0+6s_{0} \|g\|) \delta(x, y)
\]
holds for any $x, y \in X$, completing the proof.
\end{proof}

\section*{Acknowledgements}
The authors are grateful to Ursula Hamenst{\"a}dt for the motivating idea and Nicolas Monod for guiding the authors to the reference~\cite{caprace2014on-embeddings}. The authors also thank the anonymous referee for helpful comments. The authors are supported by Mid-Career Researcher Program (RS-2023-00278510) through the National Research Foundation funded by the government of Korea, and by KIAS Individual Grants (Choi: SG091901 and Kim: MG073601) at Korea Institute for Advanced Study.
  
  \bibliographystyle{amsplain}
  \bibliography{ck2023}
  
\end{document}